\pgfplotsset{compat=1.17}
\definecolor{wrwrwr}{rgb}{0.3803921568627451,0.3803921568627451,0.3803921568627451}
\definecolor{rvwvcq}{rgb}{0.08235294117647059,0.396078431372549,0.7529411764705882}
\definecolor{mblue}{rgb}{0.2, 0.3, 0.8}
\definecolor{morange}{rgb}{1, 0.5, 0}
\definecolor{mgreen}{rgb}{0.1, 0.4, 0.2}
\definecolor{mred}{rgb}{0.5, 0, 0}
\numberwithin{equation}{section}
\newcommand{\la}{\langle}
\newcommand{\rg}{\rangle}
\newtheorem{theorem}{{Theorem}}
\newtheorem*{theorem*}{Theorem}
\newtheorem{lemma}{Lemma}
\newtheorem{proposition}{Proposition}
\newtheorem{corollary}{Corollary}
\newtheorem*{corollary*}{Corollary}
\theoremstyle{definition}
\newtheorem{remark}{Remark}
\newcommand{\ve}{\varepsilon}
\newcommand{\mr}[1]{{\rm #1}}
\newcommand{\cA}{\mathcal{A}}
\newcommand{\cE}{\mathcal{E}}\newcommand{\cF}{\mathcal{F}}
\newcommand{\cL}{\mathcal{L}}
\newcommand{\cM}{\mathcal{M}}
\newcommand{\cT}{\mathcal{T}}
\newcommand{\cW}{\mathcal{W}}
\newcommand{\bR}{\mathbb{R}}
\newcommand{\bS}{\mathbb{S}}
\newcommand{\fo}{\mathfrak{o}}
\newcommand{\nc}{\newcommand}
\nc{\on}{\operatorname}
\nc{\p}{\partial}
\nc{\ol}{\overline}
\nc{\ul}{\underline}
\nc{\pa}{\partial}
\nc{\pb}{\partial_b}
\nc{\pc}{\partial_c}
\nc{\pd}{\partial_d}
\nc{\pe}{\partial_e}
\nc{\pf}{\partial_f}
\nc{\pg}{\partial_g}
\nc{\ph}{\partial_h}
\nc{\pari}{\partial_i}
\nc{\pj}{\partial_j}
\nc{\pk}{\partial_k}
\nc{\pl}{\partial_l}
\nc{\pell}{\partial_\ell}
\nc{\parm}{\partial_m}
\nc{\pn}{\partial_n}
\nc{\po}{\partial_o}
\nc{\pp}{\partial_p}
\nc{\pq}{\partial_q}
\nc{\pr}{\partial_r}
\nc{\ps}{\partial_s}
\nc{\pt}{\partial_t}
\nc{\pu}{\partial_u}
\nc{\pv}{\partial_v}
\nc{\pw}{\partial_w}
\nc{\px}{\partial_x}
\nc{\py}{\partial_y}
\nc{\pz}{\partial_z}
\nc{\Sz}{Sz\'ekelyhidi\ }
\nc{\Spec}{\on{Spec}}
\nc{\sn}{\mr{sn}}
\nc{\cn}{\mr{cn}}
\nc{\dn}{\mr{dn}}
\newcommand{\bigslant}[2]{{\raisebox{.15em}{$#1$}/\raisebox{-.15em}{$#2$}}}
\numberwithin{equation}{section}
\title{Uniqueness of Cylindrical Tangent Cones $C_{p,q} \times \bR$} 
\date{\today}
\author[Benjy Firester]{Benjy Firester}
\address{
{\href{mailto:benjyfir@mit.edu}{benjyfir@mit.edu}}}
\author[Raphael Tsiamis]{Raphael Tsiamis}
\address{
{\href{mailto:r.tsiamis@columbia.edu}{r.tsiamis@columbia.edu}}}
\author[Yipeng Wang]{Yipeng Wang}
\address{
{\href{mailto:yw3631@columbia.edu}{yw3631@columbia.edu}}}
\begin{document}

\begin{abstract}
We show the uniqueness of the cylindrical tangent cone $C(\bS^2 \times \bS^4) \times \bR$ for area-minimizing hypersurfaces in $\bR^9$, completing the uniqueness of all tangent cones of the form $C_{p,q} \times \bR$ proved by Simon for dimensions at least 10 and Sz\'ekelyhidi for the Simons cone.
\end{abstract}

\maketitle
\vspace{-0.7cm}

\section{Introduction}
A tangent cone $C$ of an area-minimizing hypersurface $M$ is a blow-up limit of rescalings of the surface at a point. 
Generally, the minimal cone $C$ may depend on the blow-up sequence; when it is unique, we obtain powerful information about the regularity of the surface. 
The uniqueness of tangent cones with isolated singularities was shown in the celebrated results of Allard-Almgren~\cite{AllardAlmgren81} in the integrable case and Simon \cite{SimonAnnals81} for non-integrable cones.
For cones with non-isolated singularities, for example, of the form $C \times \bR^k$, the uniqueness question is largely open.

In \cite{uniqueness-cylindrical}, Simon proved the uniqueness of multiplicity one tangent cones of the form $C \times \bR$ for a large class of minimal cones $C$, which includes all quadratic cones $C_{p,q} = C(\bS^p \times \bS^q)$ with $p+q \geq 7$, using an integrability condition for their Jacobi fields.
The $7$-dimensional quadratic cones $C(\bS^3 \times \bS^3)$ and $C(\bS^2 \times \bS^4)$ require a more delicate treatment since they carry a non-integrable Jacobi field; in the case of the Simons cone, the corresponding uniqueness result is proved in \cite{uniqueness-gabor}*{Theorem 1.1}.
Our Theorem completes the uniqueness result for all tangent cones of the form $C_{p,q} \times \bR$.
\begin{theorem}\label{thm}
Let $M$ be an area-minimizing hypersurface in a neighborhood of $0 \in \bR^9$ that admits $C_{2,4} \times \bR$ as a multiplicity one tangent cone at $0$.
Then, $C_{2,4} \times \bR$ is the unique tangent cone at $0$.
\end{theorem}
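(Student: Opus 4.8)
The plan is to run the \L{}ojasiewicz--Simon scheme for uniqueness of tangent cones in the cylindrical framework of Simon \cite{uniqueness-cylindrical}, incorporating the refinements of \Sz\cite{uniqueness-gabor} that handle a non-integrable Jacobi field, and then to carry out the single genuinely cone-dependent step --- a quantitative non-degeneracy estimate for the exceptional Jacobi field of $C_{2,4}$ --- directly, without the symmetry shortcut available for the Simons cone. Write $\tilde C := C_{2,4}\times\bR\subset\bR^9$, with link $\tilde\Sigma := \tilde C\cap\bS^8$, a compact minimal hypersurface with two conical singular points modelled on $C_{2,4}$. The first step is the graphical reduction: since tangent cones of area minimizers are area minimizing, $C_{2,4}$ (hence $\tilde C$) is area minimizing, and it is moreover strictly stable and strictly minimizing; combined with the multiplicity-one hypothesis, Allard's regularity theorem, and the local structure of area minimizers near isolated singularities modelled on $C_{2,4}$, this gives $r_0>0$ such that in $B_{r_0}\setminus\{0\}$, after a fixed rotation of $\bR^9$, $M$ is a single-sheeted $C^{2,\alpha}$ normal graph over $\tilde C$ of a function $u$ with $\sup_{B_\rho\setminus B_{\rho/2}}|u|/\rho\to 0$ as $\rho\to 0$, the graph being valid up to the singular axis $\{0\}\times\bR$ by the analysis of $C_{2,4}$ near its vertex. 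Passing to logarithmic cylindrical coordinates $t=-\log\rho$ on the cone and rescaling $v:=\rho^{-1}u$, the minimal surface equation takes the schematic form $\p_t^2 v + b\,\p_t v + \cA v = \cE(v,\nabla v,\nabla^2 v)$ on $\tilde\Sigma$, where $\cA$ is the (shifted) Jacobi operator of $\tilde C$ restricted to the link, $b$ a fixed constant, and $\cE$ collects the quadratically small, $t$-independent error; uniqueness of the tangent cone at $0$ is then equivalent to convergence of $v(t,\cdot)$ in $L^2(\tilde\Sigma)$ as $t\to\infty$.

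The second step is the spectral analysis of $\cA$. Using the spherical--suspension structure of $\tilde\Sigma$ over $\Sigma=\bS^2(\sqrt{1/3})\times\bS^4(\sqrt{2/3})$, one separates variables and reads off the relevant spectrum from the eigenvalues $3k(k+1)+\tfrac32\ell(\ell+3)$ of $-\Delta_\Sigma$ together with $|A_\Sigma|^2\equiv 6$. The expected output is that the space $\cK:=\ker\cA$ of neutral (``Jacobi-field'') modes splits as $\cK=\cK_{\mathrm{sym}}\oplus\bR\,\phi_*$, where $\cK_{\mathrm{sym}}$ is the $23$-dimensional space spanned by the Jacobi fields generated by the rotations of $\bR^9$ --- the $15$ coming from rotations of $\bR^8$ fixing the axis (which span the $15$-dimensional $(1,1)$-eigenspace of $-\Delta_\Sigma$), together with the $8$ that mix the axis with $\bR^8$ --- and $\phi_*$ is a single additional, exceptional Jacobi field. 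Its presence is exactly what makes Simon's integrability hypothesis fail when $p+q=6$: for these cones the indicial root $\gamma_+=-2$ of the linearized operator at the lowest eigenvalue of $C_{2,4}$ is an integer, producing a resonance (a logarithmic term) in the asymptotic expansion of the Hardt--Simon leaves of $C_{2,4}$, which is precisely the obstruction to integrating $\phi_*$. One also records a spectral gap: the remaining eigenvalues of $\cA$ are bounded away from $0$, so the component of $v$ orthogonal to $\cK$ decays at a definite rate by a three-annulus lemma together with $\epsilon$-regularity for area minimizers.

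The third and central step is a \L{}ojasiewicz--Simon inequality for the area functional $E$ on cross-sections near $\tilde\Sigma$, of the form $|E(w)-E(\tilde\Sigma)|^{1-\vartheta}\le C\,\|\operatorname{grad}E(w)\|$ for $w$ close to $\tilde\Sigma$ after factoring out $\cK_{\mathrm{sym}}$, which is integrable because it is tangent to the manifold of isometric images of $\tilde C$. The work is concentrated on the direction $\phi_*$: one expands $E$ along a family of competitors tangent to $\phi_*$ --- built from the Hardt--Simon leaves of $C_{2,4}$, suitably truncated, rescaled, and glued --- to its first non-vanishing order. Because $\tilde C$ is two-sided, $E$ is even in $w$, so the first non-vanishing term along $\phi_*$ is quartic, $E(s\phi_*+\dots)=E(\tilde\Sigma)+c\,s^4+O(s^6)$, and minimality forces $c\ge 0$; the decisive computation is to show $c>0$ (equivalently $\vartheta=\tfrac14$). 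For the Simons cone this is done using the $\bZ/2$ symmetry that exchanges the two $\bS^3$ factors; since that is unavailable for $\bS^2\times\bS^4$, I would instead evaluate $c$ directly from the explicit Hardt--Simon asymptotics and the eigenfunction expansion on $\Sigma$, and verify its positivity. This non-degeneracy then drives a finite-dimensional analysis of the center-manifold dynamics of the projection $a(t)=\la v(t,\cdot),\phi_*\rg$: on the center manifold $a$ obeys $\dot a = -c'\,a^3 + (\text{errors controlled by the }\cK^\perp\text{-decay and }\cK_{\mathrm{sym}}\text{-motion})$ with $c'>0$, so $a(t)\to 0$. Combining the $\phi_*$-ODE, the \L{}ojasiewicz inequality in the remaining directions, and the three-annulus decay of $\cK^\perp$ yields $\int_1^\infty\|\p_t v(t,\cdot)\|_{L^2(\tilde\Sigma)}\,dt<\infty$, hence $v(t,\cdot)$ converges; undoing the initial rotation gives uniqueness of $C_{2,4}\times\bR$.

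The step I expect to be the main obstacle is exactly this non-degeneracy estimate for $\phi_*$ and its dynamical consequence: it is the only place where the argument is sensitive to replacing $\bS^3\times\bS^3$ by $\bS^2\times\bS^4$, it cannot borrow the factor-exchange symmetry used in \cite{uniqueness-gabor}, and it is where the proof would genuinely break --- for instance if the quartic coefficient vanished or (which minimality forbids, but only a posteriori) had the wrong sign. A secondary, more routine difficulty is the cylindrical bookkeeping: propagating every estimate uniformly along the non-compact singular axis $\{0\}\times\bR$ rather than merely near $0$, and handling the two conical singular points of $\tilde\Sigma$; here the arguments of Simon and of Sz\'ekelyhidi are expected to transfer with only minor modifications.
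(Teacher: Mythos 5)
Your overall scheme (reduce to the Simon--Sz\'ekelyhidi uniqueness machinery, isolate the single exceptional degree-one Jacobi field $\phi_*=z^3r^{-2}-z$, and accept that the only genuinely cone-dependent step is a quantitative non-degeneracy computation tied to the Hardt--Simon leaves) is the right frame, and you correctly flag that step as the crux. But the mechanism you propose there has a genuine gap. The area functional is \emph{not} even in the $\phi_*$-direction: two-sidedness gives no such symmetry, and for $C_{2,4}$ there is no analogue of the factor-exchange symmetry of $C_{3,3}$ that could produce one --- indeed the two components of $\bR^8\setminus C_{2,4}$ carry two \emph{distinct} Hardt--Simon foliations, with sub-leading expansion coefficients $b_+\approx 0.22$ and $b_-\approx -0.93$ in $f_\pm(r)=r^{-3}+b_\pm r^{-4}+O(r^{-6})$. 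Moreover, since $\phi_*$ blows up like $r^{-2}$ at the two singular points of the link, the competitors ``tangent to $\phi_*$'' must be built by gluing in Hardt--Simon necks at scale $\delta^{1/3}$; the resulting family is not analytic in the deformation parameter, and its deviation from minimality appears at the fractional order $\delta^{4/3}$ in mean curvature (area deficit of order $|\delta|^{7/3}$), with coefficient proportional to $b_++b_-$ --- not as a quartic term $c\,s^4$ with $c\ge 0$ forced by minimality and \L{}ojasiewicz exponent $\vartheta=\tfrac14$. In particular, the quantity that must be shown non-zero, $b_++b_-$, has no a priori sign and could in principle vanish; ``verify $c>0$'' is not a computation your framework can even formulate correctly, whereas the actual proof requires separate quantitative two-sided estimates on $b_+$ and $b_-$, obtained in the paper by reparametrizing the Davini ODE for the leaves and constructing explicit piecewise sub- and supersolutions (this barrier analysis is the main new content, yielding $0<b_+$ small and $b_-<0$ with $b_++b_-<0$).

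A secondary divergence: once non-degeneracy is in hand, the paper does not run a center-manifold cubic ODE for the projection onto $\phi_*$. Instead it builds the one-parameter family $\Sigma_\delta$ of desingularized links with prescribed mean curvature $h(\delta)\zeta$, $h(\delta)=c_1(b_++b_-)\delta^{4/3}+O(|\delta|^{4/3+\kappa})$, and feeds this into Sz\'ekelyhidi's comparison surfaces $T_\delta, W_\delta$, a geometric three-annulus lemma, a \L{}ojasiewicz inequality for the area excess, and an iteration in the flat distance. Your ``routine'' step of graphical representation with estimates up to the singular axis is also not routine (it is the non-concentration issue), but it can indeed be quoted from the cited works; the fatal gap is the evenness/quartic structure and the resulting mis-identification of the non-degeneracy quantity described above.
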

The proof relies on desingularizing the link $\Sigma_0$ of $C_{2,4}\times \bR$ by gluing rescaled copies of the Hardt-Simon foliation at the singular points of $\Sigma_0$ to the graph of the non-integrable Jacobi field, artificially restoring a notion of integrability.
The desingularized surfaces will be almost minimal, and we obtain an asymptotic expansion for the Hardt-Simon foliation $C_{2,4}$ to produce a definite term in the mean curvature of the desingularizations. 
Unlike the Simons cone, $C_{2,4}$ has no additional symmetry to exploit; notably, there are two distinct Hardt-Simon foliations. 
The main new ingredient in our proof, Proposition \ref{prop:main-expansion}, is a precise understanding of the sub-leading term in the asymptotic expansion of the Hardt-Simon leaves as graphs over $C_{2,4}$, with which we can apply the \L{}ojasiewicz-type inequality and non-concentration result for cones developed in \cite{uniqueness-gabor}.
The proof of Proposition \ref{prop:main-expansion} relies on a particular reparametrization of the Hardt-Simon leaves and constructing precise barriers to prevent them from exhibiting non-analytic behavior.
Our technique can be applied algorithmically to compute higher-order asymptotic behavior of solutions to more general differential equations.

\section{Main Result}

We denote the points of $\bR^8$ by $\xi = (x,y) \in \bR^5 \times \bR^3$ and set $u = |x|, v = |y|$, so $u^2 + v^2 = r^2$.
We work with the area-minimizing cone in $\bR^8$ defined by $\mathbf{C}_0 = \{ (x,y) \in \bR^8 : u^2 = 2v^2 \}$, which is the quadratic cone $C(\bS^4 \times \bS^2)$.
Let $z$ be the coordinate along the added $\bR$-direction on the cone $\mathbf{C}_0 \times \bR$.

The homogeneous Jacobi fields on $\mathbf{C}_0$ can be well understood by a polar decomposition of the Jacobi operator.
This will play a critical step to invert the Jacobi operator, in particular, we isolate a range of degrees where there are no Jacobi fields meaning that in a proper weighted space $C^{k,\alpha}_\tau$, the kernel of the Jacobi operator $\cL$ is trivial.
For $n = 8$, there are three types of Jacobi fields whose degree lies in the range $(-2, 1]$ consisting of degree 0 translations, degree 1 rotations (generated by $O(p+1)\times O(q+1)$), and the specified Jacobi field coming from the Hardt-Simon smoothing, which has degree $-2$. 
Notably, in dimension 8, there are no Jacobi fields with degree in the range $(-3,-2)$. 
These can be computed using separation of variables for the Jacobi operator, upon obtaining the explicit spectrum of the Laplacian on $\bS^p \big(\sqrt{\frac{p}{n-2}}\big) \times \bS^q \big(\sqrt{\frac{q}{n-2}}\big)$; see \cite{SimonSolomon} for more details. 

The minimal cone $\mathbf{C}_0 \times \bR$ has a degree one homogeneous Jacobi field given by $\phi = z^3r^{-2} - z$, which also descends to a Jacobi field on the singular link $\Sigma_0 \subset \bS^8$, blowing up at the rate of $r^{-2}$ near the singular points.
This blow-up rate near the singular points will match the asymptotic rate of the Hardt-Simon foliation, which enables the construction of an almost minimal desingularization of $\Sigma_0$ via a gluing and perturbative technique.
On $\mathbf{C}_0 \times \bR$, $\phi$ and the rotations $\bigslant{\fo(9)}{\fo(5) \oplus \fo(3)}$ are the only degree one Jacobi fields. 
The results of \cite{uniqueness-gabor}*{Lemmas 2.2 -- 2.4} can be shown to apply to the cone $\mathbf{C}_0\times \bR$, with the same proofs as for the Simons cone $C_{3,3}\times \bR$.
 
Let $U_{+}=\{u>\sqrt{2}v\}$ and $U_-=\{u<\sqrt{2}v\}$ denote the two connected components of $\bR^8\setminus \mathbf{C}_0$, and we choose a unit normal vector field $\nu_{\mathbf{C}_0}(\xi)$ that points into $U_+$ for all $x\in \mathbf{C}_0\setminus \{0\}$, and it follows from a simple computation that $|\xi|\,\nu_{\mathbf{C}_0}(\xi)=\frac{\sqrt{2}}{2} (x,-2y)$. By the work of Hardt-Simon \cite{Hardt-Simon}, there exist smooth area-minimizing hypersurfaces $S_{\pm}\subset U_{\pm}$, and some constant $R_0$, such that outside $B_{R_0}\subset \bR^8$, $S_{\pm}$ can be represented as the normal graphs of some smooth functions $r f_{\pm}(r)$, where $r = |\xi|$, in the direction of the unit normal $\nu_{\mathbf{C}_0}(\xi)$.
We therefore express
\[
S_{\pm}=\left\{ (x,y) \pm f_\pm(r) (x, - 2y) \right\}.
\]
Moreover, \cite{Hardt-Simon}*{Theorem 3.2} shows that $|f_{\pm}- r^{-3}|\le O(r^{-3-\ve_0})$ as $r\to\infty$ for some $\ve_0>0$, and $U_{\pm}$ is foliated by dilations of $S_{\pm}$.
Each $f_{\pm}$ solves an equation of the form $r^2f_{\pm}''+8rf_{\pm}'+12f_{\pm}+\cE=0$, where $\cE$ contains at least quadratic terms involving $f_{\pm},rf_{\pm}'$ and $r^2f_{\pm}''$.
Since $|f_{\pm}-r^{-3}|=O(r^{-3-\ve_0})$ and the only solution of $r^2\psi''+8r\psi'+12\psi=0$ satisfying the decay $|\psi|\le O(r^{-3-\ve_0})$ is $\psi=b\,r^{-4}$ for some $b\in \bR$, we obtain some $b_{\pm} \in \bR$ so that
    \begin{equation}{\label{eqn:b-expansion}}
        |f_{\pm}-r^{-3}-b_{\pm}r^{-4}|= O(r^{-6}) \qquad \text{as } \; r \to \infty.
    \end{equation}
In the following, we will parametrize the hypersurfaces $S_{\pm}$ modulo the $O(5)\times O(3)$ symmetry in terms of the coordinates $(u,v) = e^{\varphi(t)} (\cos t, \sin t)$.
The parameter $t \in [0,t_0)$ corresponds to $S_+$ and $t \in (t_0, \frac{\pi}{2} ]$ to $S_-$, for $t_0$ satisfying $\cos^2(t_0) = 2 \sin^2(t_0)$.
Davini \cite{davini}*{\S 1} proved that $\varphi(t)$ satisfies
\begin{equation}{\label{eqn:Davini-eqn}}
\frac{d^2\varphi }{dt^2}=\left(1+\left(\frac{d\varphi}{dt}\right)^2\right)\,\left[7+\left(\frac{2-6\cos(2t)}{\sin(2t)}\right)\frac{d\varphi}{dt}\right],
\end{equation}
with $\varphi'(0)=0$. 
We will denote $w(t)=\varphi'(t)$ and consider the reparametrization $s(t)=\sin(2t)\slash \sin (2(t_0-t))$. Notice that $s(0)=0$ and $\lim_{t\nearrow t_0}s(t)\to \infty$ parametrizes the leaf $S_+$.
\begin{lemma}\label{lemma:main-lemma}
    Expressed as a function of $s \in [0,\infty)$, the function $w(s)$ solves the equation
    \begin{equation}{\label{eqn:main-ODE}}
       s\,\frac{dw}{ds}=(1+w^2)\left(\frac{7\sqrt{2}s+(\sqrt{9s^2+6s+9}-3s-9)w}{3s^2+2s+3}\right)
    \end{equation}
    for $s\in [0,\infty)$ with $w(0)=0$.
    As $s \to \infty$, we have
    \begin{equation}{\label{eqn:w-asymptotic}}
       w=\frac{\sqrt{2}}{2}s-\frac{(\tfrac{3\sqrt{2}}{2})^{\frac{2}{3}}\,b_+}{9} s^{\frac{2}{3}}+\frac{5\,(\tfrac{3\sqrt{2}}{2})^{\frac{1}{3}}\,b_+^2}{27}s^{\frac{1}{3}}+O(1)
    \end{equation}
    where $b_+$ is the coefficient of the expansion \eqref{eqn:b-expansion}. 
\end{lemma}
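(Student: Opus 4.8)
The proof splits into deriving \eqref{eqn:main-ODE} and establishing \eqref{eqn:w-asymptotic}.

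\textbf{The ODE.} The plan for \eqref{eqn:main-ODE} is a direct change of variables. From $\cos^2 t_0 = 2\sin^2 t_0$ one gets $\cos 2t_0 = \tfrac13$, $\sin 2t_0 = \tfrac{2\sqrt2}{3}$, so expanding $\sin 2(t_0-t)$ gives $s(t) = 3\sin 2t/(2\sqrt2\cos 2t - \sin 2t)$, and a one-line quotient-rule computation gives $s'(t) = 12\sqrt2/(2\sqrt2\cos 2t - \sin 2t)^2$. Solving the formula for $s$ for the trigonometric functions yields $\sin 2t = 2\sqrt2\, s/\sqrt{9s^2+6s+9}$ and $\cos 2t = (3+s)/\sqrt{9s^2+6s+9}$, whence $2\sqrt2\cos 2t - \sin 2t = 6\sqrt2/\sqrt{9s^2+6s+9}$ and therefore $s'(t) = \tfrac{\sqrt2}{2}(3s^2+2s+3)$. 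Rewriting Davini's equation \eqref{eqn:Davini-eqn} as $w'(t) = (1+w^2)[\,7 + \tfrac{2-6\cos 2t}{\sin 2t}\,w\,]$, substituting the expressions for $\cos 2t,\sin 2t$ into $\tfrac{2-6\cos 2t}{\sin 2t} = (\sqrt{9s^2+6s+9}-3s-9)/(\sqrt2\,s)$, and using $\tfrac{dw}{ds} = w'(t)/s'(t)$ produces \eqref{eqn:main-ODE} exactly; the initial condition $w(0)=0$ follows from $\varphi'(0)=0$ and $s(0)=0$.

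\textbf{The expansion.} Rather than analyze \eqref{eqn:main-ODE} at $s=\infty$ directly, the plan is to import the expansion from the Hardt--Simon side. Parametrize $S_+$ by base points $\xi=(x,y)\in\mathbf{C}_0$ of radius $r$; using $r\,\nu_{\mathbf{C}_0}(\xi) = \tfrac{\sqrt2}{2}(x,-2y)$, the corresponding point of $S_+$ is $\xi + r f_+(r)\,\nu_{\mathbf{C}_0}(\xi) = \bigl((1+\tfrac{\sqrt2}{2}f_+(r))x,\ (1-\sqrt2 f_+(r))y\bigr)$. Taking the moduli $u,v$ of its two components and comparing with $(u,v) = e^{\varphi}(\cos t,\sin t)$ gives the exact relations
\[
\tan t = \frac{1-\sqrt2 f_+(r)}{\sqrt2\,(1+\tfrac{\sqrt2}{2}f_+(r))},
\qquad
e^{2\varphi} = r^2\bigl(1+f_+(r)^2\bigr).
\]
From the first relation together with $\tan t = (\sqrt{9s^2+6s+9}-3-s)/(2\sqrt2\,s)$ (obtained from the formula for $\cos 2t$ via the half-angle identity and rationalizing), one solves for $f_+$ as a function of $s$,
\[
f_+ = \sqrt2\cdot\frac{3s+3-\sqrt{9s^2+6s+9}}{3s-3+\sqrt{9s^2+6s+9}} = \frac{\sqrt2}{3}s^{-1} - \frac{\sqrt2}{9}s^{-2} + O(s^{-3}).
\]
Next, since the homogeneous solutions of $r^2\psi'' + 8r\psi' + 12\psi = 0$ are exactly $r^{-3},r^{-4}$ and $\cE = O(r^{-6})$ along the leaf, the coefficient of $r^{-5}$ in $f_+(r)$ vanishes, so Lagrange inversion of \eqref{eqn:b-expansion} gives $\log r = -\tfrac13\log f_+ + \tfrac{b_+}{3} f_+^{1/3} - \tfrac{5b_+^2}{18} f_+^{2/3} + O(f_+)$. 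Substituting this and the expansion of $f_+(s)$ into $\varphi = \log r + \tfrac12\log(1+f_+^2)$, and using $\tfrac{\sqrt2}{3} = (\tfrac{3\sqrt2}{2})^{-1}$, yields
$\varphi(s) = \tfrac13\log s + C + \tfrac{b_+}{3}(\tfrac{3\sqrt2}{2})^{-1/3} s^{-1/3} - \tfrac{5b_+^2}{18}(\tfrac{3\sqrt2}{2})^{-2/3} s^{-2/3} + O(s^{-1})$ for some constant $C$. Since $w = \varphi'(t) = \tfrac{d\varphi}{ds}\,s'(t) = \tfrac{\sqrt2}{2}(3s^2+2s+3)\,\tfrac{d\varphi}{ds}$, differentiating in $s$ and multiplying out gives \eqref{eqn:w-asymptotic}; the constant $C$ drops out.

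\textbf{The main obstacle.} The ODE derivation is bookkeeping; the real content of the second part is turning these formal series into genuine asymptotics. One needs the $O(\,\cdot\,)$ errors above to survive one differentiation in $s$, which means propagating derivative bounds for $f_+(r)$ --- available from \eqref{eqn:b-expansion} together with $r^2 f_+'' + 8 r f_+' + 12 f_+ + \cE = 0$ and smoothness of the foliation --- through each algebraic step, and verifying that no $\log s$ terms appear at orders $s^{2/3},s^{1/3}$ (equivalently, that the $s^{2/3}$ mode, the free constant of \eqref{eqn:main-ODE} linearized at $s=\infty$, is exactly matched by the $f_+^{1/3}$ term of \eqref{eqn:b-expansion} with no resonance). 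I expect this error propagation to be the main obstacle; the barrier arguments used for Proposition \ref{prop:main-expansion} are what let one push the same scheme to all orders, but for \eqref{eqn:w-asymptotic} the input \eqref{eqn:b-expansion} already suffices.
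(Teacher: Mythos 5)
Your proposal is correct and follows essentially the same route as the paper: the same change of variables $t\mapsto s$ (with the same formulas for $\sin 2t$, $\cos 2t$, $ds/dt$) yields \eqref{eqn:main-ODE}, and the asymptotics \eqref{eqn:w-asymptotic} are obtained exactly as in the paper by relating the $(s,\varphi)$ parametrization to the normal-graph data $(r,f_+)$ via $e^{2\varphi}=r^2(1+f_+^2)$ and the algebraic relation between $s$ and $f_+$, then feeding in \eqref{eqn:b-expansion} and differentiating; your reorganization (expanding $\log r$ in powers of $f_+^{1/3}$ rather than inverting $s(r)$) is only a cosmetic difference. The error-propagation issue you flag (that the $O$-terms must survive one $s$-differentiation, justified via derivative bounds for $f_+$ from its ODE) is handled correctly and is, if anything, more careful than the paper's implicit treatment.
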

\begin{proof}
Since $\cos(2t_0)=\frac{1}{3}$, we have $s^{-1}=\tfrac{2\sqrt{2}}{3}\cot(2t)-\tfrac{1}{3}$ and $\cot(2t)= \frac{3s^{-1}+1}{2\sqrt{2}}$. It then follows that
    \[
    \sin(2t)=\frac{1}{\sqrt{1+\cot^2(2t)}}=\frac{2\sqrt{2}s}{\sqrt{9s^2+6s+9}}\quad \text{ and }\quad
        \cos(2t)=\frac{\cot^2(2t)}{\sqrt{1+\cot^2(2t)}}=\frac{s+3}{\sqrt{9s^2+6s+9}}.
    \]
    In particular, we can express
    \[
    \frac{2-6\cos(2t)}{\sin(2t)}=\frac{\sqrt{9s^2+6s+9}-3s-9}{\sqrt{2}s}.
    \]
    Moreover, differentiating the identity  \(
    s^{-1}=\tfrac{2\sqrt{2}}{3}\cot(2t)-\tfrac{1}{3}
    \)
    with respect to $t$ yields
    \begin{equation}{\label{eqn:s-t-derivative}}
    s^{-2}\frac{ds}{dt}=\frac{4\sqrt{2}}{3\sin^2(2t)}=\frac{3s^2+2s+3}{\sqrt{2}s^2}.
    \end{equation}
    Using equation \eqref{eqn:Davini-eqn}, we conclude that
    \begin{align*}
        \frac{dw}{ds}&=\frac{dw}{dt}\cdot \frac{dt}{ds} = (1+w^2)\left(7+\frac{\sqrt{9s^2+6s+9}-3s-9}{\sqrt{2}s}w\right)\cdot\frac{\sqrt{2}}{3s^2+2s+3},
    \end{align*}
    which proves \eqref{eqn:main-ODE}.
    To obtain the expansion \eqref{eqn:b-expansion}, we represent the points $\xi + |\xi| f_+(|\xi|) \nu_{\mathbf{C}_0}(\xi) \in S \setminus B_{R_0}$ by $(u,v)=(e^{\varphi(t)}\,\cos (t),e^{\varphi(t)}\,\sin (t))$.
    Since $|\xi|\,\nu_{\mathbf{C}_0}(\xi)=\tfrac{\sqrt{2}}{2}(x,-2y)$, we can write
    \[
    \xi+|\xi|\,f_+(|\xi|)\nu_{\mathbf{C}_0}(\xi)=(x,y)+ \frac{\sqrt{2}}{2}f_+(r) (x,-2y),
    \]
    for some $\xi = (x,y) \in \mathbf{C}_0$. 
    Using $u^2 + v^2 = r^2$ and $u^2 = 2v^2$ on $\mathbf{C}_0$, we obtain
    \begin{align*}
    e^{2\varphi(t)}&=\left(1+\frac{f_+}{\sqrt{2}} \right)^2u^2+\left(1-\frac{2f_+}{\sqrt{2}} \right)^2v^2 = (1 + f_+(r)^2) \, r^2.
    \end{align*}
    Using the asymptotics $|f_+-r^{-3}|=O(r^{-4})$, we arrive at
    \begin{equation}\label{eqn:phi-asymptotics}
        |\varphi(t) - \log r| = O(r^{-6}) \quad \text{as } \; r \to \infty.
    \end{equation}
    We now express 
    $$\tan^2 t=\frac{\left(1-\frac{2f_+}{\sqrt{2}} \right)^2v^2}{\left(1+\frac{f_+}{\sqrt{2}} \right)^2u^2}=\frac{1}{2}\frac{\left(1-\frac{2f_+}{\sqrt{2}} \right)^2}{\left(1+\frac{f_+}{\sqrt{2}} \right)^2}$$
    in order to obtain
    \[
        s=\frac{3\sin(2t)}{2\sqrt{2}\,\cos(2t)-\sin(2t)} =\frac{6\tan t}{2\sqrt{2}(1-\tan^2 t)-2\tan t}=\frac{\sqrt{2}}{3}f_+^{-1}+O(1).
    \]
    Using the expansion $|f_+-r^{-3}-b_+\,r^{-4}|=O(r^{-6})$ from \eqref{eqn:b-expansion}, we have 
    $$s= \frac{\sqrt{2}}{3}(r^{3}-b_+r^{2}+b_+^2 r)+O({1}).$$ 
    We can therefore compute
    \[
    s^{\frac{1}{3}}= \bigl(\tfrac{\sqrt{2}}{3} \bigr)^{\frac{1}{3}} \bigg(r-\frac{b_+}{3}+\frac{2b_+^2}{9r}\bigg)+O(r^{-2}) \quad \text{ and }\quad s^{-\frac{1}{3}}= \bigl(\tfrac{\sqrt{2}}{3} \bigr)^{-\frac{1}{3}} \left(r^{-1}+\frac{b_+}{3r^2}\right)+O(r^{-3})
    \]
    as $r\to\infty$, so as $s\to\infty$,
    \[
    r= (\tfrac{\sqrt{2}}{3})^{-\frac{1}{3}} s^{\frac{1}{3}}+ \frac{b_+}{3}- \frac{2 (\frac{\sqrt{2}}{3})^{\frac{1}{3}} b_+^2}{9}s^{-\frac{1}{3}}+O(s^{-\frac{2}{3}}).
    \]
    It then follows from \eqref{eqn:phi-asymptotics} that
    \[
    \varphi=\frac{1}{3}\log s-\frac{\frac{1}{2} \log 2 - \log 3}{3} +\frac{(\frac{\sqrt{2}}{3})^{\frac{1}{3}} b_+}{3}s^{-\frac{1}{3}}-\frac{5 (\frac{\sqrt{2}}{3})^{\frac{2}{3}} \, b_+^2}{18} s^{-\frac{2}{3}}+O(s^{-1}).
    \]
    Differentiating the above with respect to $s$ shows that
    \begin{align*}
    \frac{d\varphi}{ds}&=\frac{1}{3s}- \frac{ (\frac{\sqrt{2}}{3})^{\frac{1}{3}} b_+}{9}s^{-\frac{4}{3}}+ \frac{5 (\frac{\sqrt{2}}{3})^{\frac{1}{3}} \,b_+^2}{27}s^{-\frac{5}{3}}+O(s^{-2}).
    \end{align*}
    Multiplying the above identities by \eqref{eqn:s-t-derivative} therefore yields the claimed expansion \eqref{eqn:w-asymptotic}.
\end{proof}
The above argument also applies to the analysis of the leaf $S_-$, parametrized by $(u,v) = e^{\varphi(t)}(\cos t, \sin t)$ for $t \in (t_0, \frac{\pi}{2}]$.
Introducing the variable $\hat{s} := - s = \frac{\sin(2t)}{\sin(2(t-t_0))}$, we have $\hat{s}(\frac{\pi}{2}) = 0$ and $\lim_{t \searrow s_0} \hat{s}(t) \to \infty$.
As in Lemma \ref{lemma:main-lemma}, we compute
\begin{align*}
\sin(2t) &= \frac{ 2 \sqrt{2} \, \hat{s}}{ \sqrt{ 9 \hat{s}^2 - 6 \hat{s} + 9}} \qquad \text{and} \qquad \cos(2t) = \frac{ \hat{s} - 3 }{ \sqrt{ 9 \hat{s}^2 - 6 \hat{s} + 9}}, \\
\frac{d \hat{s}}{dt} &= - \frac{3 \hat{s}^2 - 2 \hat{s} + 3}{\sqrt{2}}.
\end{align*}
We express $\varphi'(t)$ for $t \in (t_0, \frac{\pi}{2}]$ as a function of $\hat{s} \in (0, \infty)$, denoted by $\hat{w}(\hat{s})$.
The same computations as in Lemma \ref{lemma:main-lemma} yield $\varphi(t) = \frac{1}{2} \log (r^2 (1+f_-^2)) = \log r + O(r^{-6})$, and we obtain expressions for $\hat{w}$ analogous to \eqref{eqn:main-ODE} and \eqref{eqn:w-asymptotic}.
\begin{corollary}
    In the above notation, the function $\hat{w}(\hat{s})$ solves the equation
\begin{equation}\label{eqn:w-hat-for-s-hat}
   \hat{s} \frac{d \hat{w}}{d \hat{s}} = - (1 + \hat{w}^2) \left( \frac{7 \sqrt{2} \, \hat{s} + ( \sqrt{9 \hat{s}^2 - 6 \hat{s} + 9} - 3 \hat{s} + 9) \hat{w}}{3 \hat{s}^2 - 2 \hat{s} + 3} \right) 
\end{equation}
with initial condition $\hat{w}(0) = 0$.
Moreover, $\hat{w}(\hat{s})$ admits the expansion
\begin{equation}\label{eqn:w-hat-s-expansion}
    \hat{w}(\hat{s}) = - \frac{\sqrt{2}}{2} \hat{s} +\frac{(\tfrac{3\sqrt{2}}{2})^{\frac{2}{3}}\,b_-}{9}\hat{s}^{\frac{2}{3}}-\frac{5\,(\tfrac{3\sqrt{2}}{2})^{\frac{1}{3}}\,b_-^2}{27}\hat{s}^{\frac{1}{3}}+O(1).
\end{equation}
\end{corollary}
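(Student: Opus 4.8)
The plan is to follow the proof of Lemma~\ref{lemma:main-lemma} step by step with the reparametrization $\hat s = -s = \sin(2t)/\sin(2(t-t_0))$ in place of $s$, carefully propagating the signs that this substitution introduces. Geometrically the situation is unchanged: $S_-$ is the normal graph $\{(x,y) - \tfrac{\sqrt2}{2} f_-(r)(x,-2y)\}$ over $\mathbf{C}_0$ in the $-\nu_{\mathbf{C}_0}$ direction, so the whole computation is the one from Lemma~\ref{lemma:main-lemma} with $f_+$ replaced by $-f_-$ and $s$ replaced by $-\hat s$.

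I would first derive \eqref{eqn:w-hat-for-s-hat}. The displayed formulas for $\sin(2t),\cos(2t)$ and $\tfrac{d\hat s}{dt}$ come from $\hat s^{-1} = -\tfrac{2\sqrt2}{3}\cot(2t) + \tfrac13$ by the same algebra as in Lemma~\ref{lemma:main-lemma}, and they give $\tfrac{2-6\cos(2t)}{\sin(2t)} = \bigl(\sqrt{9\hat s^2-6\hat s+9}-3\hat s+9\bigr)/(\sqrt2\,\hat s)$. Substituting this together with $\tfrac{dt}{d\hat s} = -\sqrt2/(3\hat s^2-2\hat s+3)$ into Davini's equation \eqref{eqn:Davini-eqn} via $\tfrac{d\hat w}{d\hat s} = \tfrac{dw}{dt}\tfrac{dt}{d\hat s}$ and clearing denominators yields \eqref{eqn:w-hat-for-s-hat}. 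The boundary condition $\hat w(0)=0$ is the statement $\varphi'(\tfrac\pi2)=0$, which holds because $S_-$ is smooth where it meets the axis $\{x=0\}$ (corresponding to $t=\tfrac\pi2$) and the $O(5)$ symmetry then forces a horizontal tangent there; equivalently, the coefficient $\tfrac{2-6\cos(2t)}{\sin(2t)}$ in \eqref{eqn:Davini-eqn} blows up as $t\to\tfrac\pi2$, so $\varphi'$ must vanish for $\varphi''$ to stay bounded.

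For the expansion \eqref{eqn:w-hat-s-expansion}, I would rerun the asymptotic computation of Lemma~\ref{lemma:main-lemma} for $S_-$. One gets $e^{2\varphi(t)} = (1+f_-(r)^2)r^2$, hence $\varphi(t)=\log r + O(r^{-6})$, and the analogue of the $\tan^2 t$ computation, now $\tfrac12(1+\sqrt2 f_-)^2/(1-\tfrac{f_-}{\sqrt2})^2$, yields $\hat s = \tfrac{\sqrt2}{3}f_-^{-1}+O(1) = \tfrac{\sqrt2}{3}(r^3-b_-r^2+b_-^2 r)+O(1)$ via \eqref{eqn:b-expansion} applied to $f_-$. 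This relation between $\hat s$ and $r$ has the same form as the one in Lemma~\ref{lemma:main-lemma} with $b_+\mapsto b_-$, so inverting for $r$, substituting into $\varphi=\log r+O(r^{-6})$, and differentiating produces $\tfrac{d\varphi}{d\hat s}$ in the same shape as $\tfrac{d\varphi}{ds}$ there, again with $b_+\mapsto b_-$. The one genuine difference is that now $\hat w = \tfrac{d\varphi}{d\hat s}\cdot\tfrac{d\hat s}{dt} = -\tfrac{d\varphi}{d\hat s}\cdot\tfrac{3\hat s^2-2\hat s+3}{\sqrt2}$, whereas in Lemma~\ref{lemma:main-lemma} one multiplies by $+\tfrac{3s^2+2s+3}{\sqrt2}$; since the difference of the two polynomials is $4\hat s$ and $\tfrac{d\varphi}{d\hat s}=\tfrac1{3\hat s}+O(\hat s^{-4/3})$, interchanging them alters $\hat w$ by only $O(1)$, which is swallowed by the error term. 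Thus $\hat w(\hat s)$ equals $-1$ times the right-hand side of \eqref{eqn:w-asymptotic} with $s\mapsto\hat s$ and $b_+\mapsto b_-$, up to $O(1)$ — which is precisely \eqref{eqn:w-hat-s-expansion}. Everything here is routine given Lemma~\ref{lemma:main-lemma}; the only step that needs care is the sign bookkeeping under $\hat s=-s$, in particular that $\mathbf{C}_0$ is \emph{not} symmetric in $x$ and $y$ — so one cannot just quote the Lemma after $t\mapsto\tfrac\pi2-t$ — and that the discrepancy between $3s^2+2s+3$ and $3s^2-2s+3$ is harmless only because it lies below the order to which \eqref{eqn:w-asymptotic} is asserted.
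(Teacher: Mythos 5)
Your proposal is correct and follows essentially the same route as the paper, which likewise derives the corollary by rerunning the computations of Lemma~\ref{lemma:main-lemma} with $\hat{s}=-s$ and $f_-$ in place of $f_+$; your sign bookkeeping (including the observation that swapping $3\hat{s}^2-2\hat{s}+3$ for $3\hat{s}^2+2\hat{s}+3$ only affects the $O(1)$ term) matches the paper's computation. The only addition is your explicit justification of $\hat{w}(0)=0$ via the profile curve meeting the axis $\{u=0\}$ orthogonally, which the paper leaves implicit.
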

Combining the above results, we refine the asymptotic expansion of the Hardt-Simon leaves $S_{\pm}$.
\begin{proposition}{\label{prop:main-expansion}}
    There exist constants $b_+ > 0$ and $b_- < 0$, such that
    \[
    |f_+(r) - r^{-3} - b_+ r^{-4}| = O(r^{-6}) \qquad \text{and} \qquad |f_-(r) - r^{-3} - b_- r^{-4}| = O(r^{-6})
    \] 
    as $r\to\infty$.
    These constants satisfy 
    \begin{equation}\label{eqn:b+-Estimates}
     0 <\frac{(\tfrac{3\sqrt{2}}{2})^{\frac{2}{3}}}{9} b_+\leq 0.1  \qquad \text{and} \qquad  \frac{(\tfrac{3\sqrt{2}}{2})^{\frac{2}{3}}}{9} b_- \leq -0.11.
    \end{equation}
    Consequently, the sum $(b_+ + b_-) \leq - 9 \cdot 10^{-2} \cdot(\tfrac{3\sqrt{2}}{2})^{-\frac{2}{3}} < 0$ is non-zero.
\end{proposition}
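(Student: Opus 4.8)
\noindent\textit{Proof plan.} The existence part, $|f_\pm(r)-r^{-3}-b_\pm r^{-4}|=O(r^{-6})$, is already \eqref{eqn:b-expansion}, so the substance of the statement is the sign and the two decimal bounds. Writing $\beta_\pm:=\tfrac19\bigl(\tfrac{3\sqrt2}{2}\bigr)^{2/3}b_\pm$ and rearranging \eqref{eqn:w-asymptotic}, \eqref{eqn:w-hat-s-expansion} into $w(s)=\tfrac{\sqrt2}{2}s-\beta_+s^{2/3}+5\sqrt2\,\beta_+^2 s^{1/3}+O(1)$ and $\hat w(\hat s)=-\tfrac{\sqrt2}{2}\hat s+\beta_-\hat s^{2/3}-5\sqrt2\,\beta_-^2\hat s^{1/3}+O(1)$, one reads off
\[
\beta_+=\lim_{s\to\infty}\frac{\tfrac{\sqrt2}{2}s-w(s)}{s^{2/3}},\qquad \beta_-=\lim_{\hat s\to\infty}\frac{\hat w(\hat s)+\tfrac{\sqrt2}{2}\hat s}{\hat s^{2/3}},
\]
both limits existing by those expansions. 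Hence the problem reduces to proving $0<\beta_+\le 0.1$ and $\beta_-\le-0.11$ using only the scalar first-order ODEs \eqref{eqn:main-ODE}, \eqref{eqn:w-hat-for-s-hat}; the concluding inequality is then immediate, since $b_\pm=\tfrac{9}{(3\sqrt2/2)^{2/3}}\beta_\pm$ gives $b_+>0$, $b_-<0$ and $b_++b_-=\tfrac{9}{(3\sqrt2/2)^{2/3}}(\beta_++\beta_-)\le -9\cdot10^{-2}\,(3\sqrt2/2)^{-2/3}<0$.

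Both ODEs have the form $s\,y'=F(s,y)$ with $F$ real-analytic on $\{s\ge0\}$ (indeed $9s^2\pm6s+9=(3s\pm1)^2+8>0$ and $3s^2\pm2s+3>0$), so the initial-value problem with $y(0)=0$ has a unique global solution and enjoys the usual comparison principle on any ray $[s_1,\infty)$. The plan is to sandwich $w$ and $\hat w$ between explicit sub- and supersolutions whose leading behaviour matches \eqref{eqn:w-asymptotic}, resp.\ \eqref{eqn:w-hat-s-expansion}: in the case of $w$, functions of the form
\[
Y_a(s)=\tfrac{\sqrt2}{2}s-a\,s^{2/3}+5\sqrt2\,a^2 s^{1/3}+c_{-1}\,s^{-1/3}+c_{-2}\,s^{-2/3}+\cdots ,
\]
in which the leading two coefficients are the free parameters and the remaining ones get pinned down by forcing the differential inequality $sY_a'\lessgtr F(s,Y_a)$ to hold to progressively higher order in $s$; the threshold where it still holds on the \emph{whole} ray determines the admissible $a$, and that is where the values $0.1$ and $0.11$ come from. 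Each such verification I would reduce to an algebraic inequality in $s$ by bracketing the radical $\sqrt{9s^2\pm6s+9}$ from both sides by rational truncations of its binomial series. For the strict positivity $\beta_+>0$ I would compare $w$ with a supersolution $Y_\varepsilon$ for small $\varepsilon>0$ to get $w\le Y_\varepsilon$ and hence $\beta_+\ge\varepsilon$; as a sanity check, a short computation gives $F(s,\tfrac{\sqrt2}{2}s)/s=\tfrac{\sqrt2}{2}-\tfrac{2\sqrt2}{9s}+O(s^{-2})$, so the line $\tfrac{\sqrt2}{2}s$ is itself a strict supersolution for large $s$. Because each barrier holds only on a ray $[s_1,\infty)$ with $s_1>0$, one also needs the basepoint comparison at $s_1$; for this I would use the near-origin behaviour $w(s)=\tfrac{7\sqrt2}{9}s+o(s)$ (from linearizing \eqref{eqn:main-ODE} at $s=0$, where $\sqrt{9s^2+6s+9}\to3$) together with the identity $\operatorname{sign} w' = \operatorname{sign}\bigl(7\sqrt2\,s-(3s+9-\sqrt{9s^2+6s+9})\,w\bigr)$, which --- since $s\mapsto 3s+9-\sqrt{9s^2+6s+9}$ is positive and increasing --- forces $w$ to be increasing with $0<w(s)<\tfrac{7\sqrt2}{6}s$ on $(0,\infty)$, enough to bracket $w(s_1)$ by elementary estimates (or, failing that, a validated integration of \eqref{eqn:main-ODE} on the compact interval $[0,s_1]$); the analysis of $\hat w$ is identical.

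The hard part is making this last circle of estimates quantitatively airtight. The differential inequalities for the chosen barriers must be certified on the entire ray $s\ge s_1$, not merely asymptotically, which means controlling $\sqrt{9s^2\pm6s+9}$ by rational functions sharply enough that the residual in $sY_a'\lessgtr F(s,Y_a)$ keeps a fixed sign throughout; and the bracket on $w$ (resp.\ $\hat w$) over $[0,s_1]$ must be tight enough to link up with the asymptotic regime through the comparison at $s_1$. Doing all this sharply enough to separate $0.1$ from $0.11$ is precisely the ``precise barriers'' and algorithmic higher-order-asymptotics point made in the introduction, and it is what ultimately forces $b_++b_-\ne0$.
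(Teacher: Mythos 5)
Your reduction to the scalar ODEs is the paper's reduction verbatim: the $O(r^{-6})$ expansion is taken from \eqref{eqn:b-expansion}, the constants are read off from \eqref{eqn:w-asymptotic} and \eqref{eqn:w-hat-s-expansion}, and the bounds are to come from explicit sub/supersolutions of \eqref{eqn:main-ODE} and \eqref{eqn:w-hat-for-s-hat} (your rewriting in terms of $\beta_\pm$ is algebraically consistent, and your sanity check $F(s,\tfrac{\sqrt2}{2}s)/s=\tfrac{\sqrt2}{2}-\tfrac{2\sqrt2}{9s}+O(s^{-2})$ matches the constant $-\tfrac{2\sqrt2}{9}$ that drives the paper's positivity argument). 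But the proposal stops exactly where the content of the proposition begins: no barrier is actually exhibited, no differential inequality is certified, and the basepoint comparison is left to ``elementary estimates or validated integration.'' The elementary bracket you offer, $0<w(s)<\tfrac{7\sqrt2}{6}s$, cannot do the job: a lower barrier with leading behaviour $\tfrac{\sqrt2}{2}s-0.1\,s^{2/3}$ must be verified to lie \emph{below} $w$ at the matching point $s_1$ (and an upper barrier for the $\varepsilon$-argument must lie \emph{above} $w(s_1)$), which requires two-sided quantitative control of $w(s_1)$ at, say, $s_1\approx 37$, far beyond what $0<w<\tfrac{7\sqrt2}{6}s$ gives. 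So as written the plan cannot close without either a rigorous computer-assisted enclosure of $w$ on $[0,s_1]$ (a substantial missing component you do not carry out) or the paper's device, which is precisely to avoid the basepoint issue.

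For comparison, the paper's proof handles this in two ways you should note. For $b_+>0$ it uses a supersolution valid on \emph{all} of $[0,\infty)$, namely $h(s)=\tfrac{\sqrt2}{2}s+\tfrac{2s}{2+5s}$, anchored at $s=0$ by the linearization $w(s)=\tfrac{7\sqrt2}{9}s+O(s^2)$, and then shows via a differential inequality that if $w-\tfrac{\sqrt2}{2}s$ were bounded below it would in fact decay like $-\tfrac{1}{25}\log s$; unboundedness below plus \eqref{eqn:w-asymptotic} forces $b_+>0$ without ever needing a small-$\varepsilon$ barrier on a ray. For the bounds $0.1$ and $0.11$ it constructs \emph{piecewise} barriers \eqref{eqn:g-subsolution-for-b+} and \eqref{eqn:g-supersolution-for-b-} starting at $s=0$ (so the comparison is again anchored at the origin, with one-sided jump conditions at the break points replacing any knowledge of $w$ at interior points), and certifies each differential inequality exactly by your ``bracket the radical'' idea in its sharpest form: move the radical to one side, square, and check the sign of an explicit polynomial ($\tilde F$, $Q_3$, etc.). Those explicit polynomial verifications and the interval-by-interval Taylor-polynomial pieces on $[1,\tfrac{81}{20})$ and $[\tfrac{81}{20},37)$ are the actual proof; your proposal identifies the right framework but leaves this quantitative core, and hence the conclusion $b_++b_-<0$, unestablished.
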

The above estimates on $b_{\pm}$ are crude, aiming to use the simplest possible barriers in the proof below to show that $(b_+ + b_-) < 0$.
Using more accurately chosen barriers, one can show that $b_+ \approx 0.22$ and $b_- \approx - 0.93$. 
\begin{proof}
In view of the expansion \eqref{eqn:w-asymptotic}, we first prove that $b_+>0$ by showing that the function $w(s) - \frac{\sqrt{2}}{2}s$ is unbounded below.
    We define
    \[
    G(s,w) := s \frac{dw}{ds} - \frac{7 \sqrt{2}}{3} s + 2w,
    \]
    which from equation~\eqref{eqn:main-ODE} is an analytic function containing at least quadratic terms in evolving $s$ and $w$.
    Since $G(s, \tfrac{7 \sqrt{2}}{9} s) = 0$, standard ODE theory shows $|w(s) - \frac{7 \sqrt{2}}{9}s| = O(s^2)$ for all sufficiently small $s$.
    We now claim that the function $h(s) := \frac{\sqrt{2}}{2} s + \frac{2s}{2+5s}$ is a supersolution of \eqref{eqn:main-ODE}, meaning that the function
    \begin{equation}\label{eqn:F-g-supersolution}
        F(h(s)) := s h'(s) - (1+h^2)\left(\frac{7\sqrt{2}s+(\sqrt{9s^2+6s+9}-3s-9) \,h}{3s^2+2s+3}\right)
    \end{equation}
    satisfies $F(h(s)) \geq 0$.
    We write
    \[
    F(h(s)) = s \frac{R_1(s) - R_2(s) \sqrt{s^2 + \frac{2}{3} s + 1}}{(2 + 5 s)^3 \,(3 + 2 s + 3 s^2)}
    \]
    for polynomials $R_1, R_2$ of degrees $6$ and $5$.
    We then compute that $\tilde{F}(h(s)) := R_1(s)^2 - R_2(s)^2 (s^2 + \frac{2}{3} s +1)$ is the degree-ten polynomial
    \begin{align*}
        \tilde{F}(s)&= (8480 - 2112\sqrt{2}) + 
 32 (2467 - 707\sqrt{2}) s + (286776 - 82496\sqrt{2}) s^2 + 
 112 (4061 - 730\sqrt{2}) s^3 \\
 & \quad + 2 (71761 + 97590\sqrt{2}) s^4 +(517938\sqrt{2}-382130 ) s^5 + 
 \tfrac{5}{2} (122924\sqrt{2}-99677 ) s^6 \\ 
 & \quad +(86075 - 42225\sqrt{2}) s^7 - 
 \tfrac{125}{2} (395 - 162\sqrt{2}) s^8 - \tfrac{625}{2} (44\sqrt{2}-41) s^9 + 
 6250 (5 - 3\sqrt{2}) s^{10}.
    \end{align*}
Since $\tilde{F}(0)>0$ and the coefficients of $1, s, \dots, s^6$ are positive, we can bound 
\begin{align*}
    \tilde{F}(s)
 &\geq 25000\, s^7 - \tfrac{125}{2} \cdot 175\, s^8 - \tfrac{625}{2}\cdot 30\,s^9  + \tfrac{3}{4}\cdot6250\,  s^{10}\\
 &= \tfrac{3125}{2} s^7 (16 - 7 s - 6 s^2 + 3 s^3),
\end{align*}
where the cubic $Q(s) := 16 - 7s - 6s^2 + 3s^3$ is positive for all $s > 0$, with $Q(s) > Q ( \frac{2 + \sqrt{11}}{3}) > 1$.
This shows that $h(s)$ is a supersolution of \eqref{eqn:main-ODE}.
    Since $|h(s) - \frac{2+\sqrt{2}}{2}s| = O(s^2)$, we have $w(s) < h(s)$ for $s$ sufficiently small, where $w < h$ for all $s \in (0 ,\infty)$.
    In particular, the function $\ve(s) := w(s) - \frac{\sqrt{2}}{2}$ satisfies $\sup_{(0, \infty)} \ve(s) \leq \sup_{(0,\infty)} \frac{2s}{2 + 5s} = \frac{2}{5}$.
    Suppose for contradiction that $\ve(s) > - C_1$ is bounded below, for some $C_1>0$, so that $- C_1 < \ve(s) \leq  \frac{2}{5}$ for all $s \in (0,\infty)$.
    For all $s$ large, we have
    \[
    0 < \left( \sqrt{9s^2 + 6 s + 9} - 3s - 9 \right) - \left( - 8 + \tfrac{4}{3} s^{-1} \right) < \tfrac{1}{2} s^{-2}
    \]
    and $1 + w^2 = s^2\left[\frac{1}{2}+ \frac{\sqrt{2}\ve}{s} + \frac{1 + \ve^2}{s^2}\right]$.
    Since $w > 0$ for $s$ large, we apply this in \eqref{eqn:main-ODE} combined with the Taylor expansion bound $\frac{1}{3s^2 + 2s + 3} \leq \frac{1}{3s^2}\left(1 - \frac{2}{3s} \right)$ and the bound $\ve(s) \leq \frac{2}{5}$ to show
    \begin{align*}
        s \frac{dw}{ds} 
        &\leq \frac{1}{3s^2} \left( 1 - \frac{2}{3s} \right) \cdot s^2 \left( \frac{1}{2} + \frac{\sqrt{2} \ve}{s} + \frac{1+\ve^2}{s^2} \right) \cdot \left(3\sqrt{2} s + \frac{2\sqrt{2}}{3} - 8\ve + \frac{4\ve}{3s}\right) \\
        &\leq \frac{\sqrt{2}}{2} s+ \left(\frac{2\ve}{3} -  \frac{2\sqrt{2}}{9} \right) + O(s^{-1}) \\
        &\leq \frac{\sqrt{2}}{2} s - \frac{1}{25}.
    \end{align*}
Therefore, for $s$ sufficiently large, we arrive at the differential inequality $w' - \frac{\sqrt{2}}{2} < - \frac{1}{25 s}$, proving that $w - \frac{\sqrt{2}}{2}s \leq C_2 - \frac{1}{25} \log s$ is unbounded.
    This contradicts our original assumption, so $b_+>0$.

    To prove the upper bound on $b_+$, we produce a subsolution $g(s)$ to the ODE \eqref{eqn:main-ODE} for $w$, given by
    \begin{equation}\label{eqn:g-subsolution-for-b+}
    g(s) := \begin{cases}
    \tfrac{104}{95} s -\tfrac{1}{5} s^{\frac{3}{2}} & s \in [0,1), \\
    - \frac{1}{840} s^4 + \frac{1}{64} s^3 - \frac{9}{110} s^2 + \frac{83}{91} s + \frac{1}{20}, & s \in [1,\tfrac{81}{20}), \\
    \tfrac{\sqrt2}{2} s + \tfrac{1}{4} - \tfrac{3}{250} (s - \tfrac{81}{20}) - \tfrac{2}{10^5} \, (s - \tfrac{81}{20})^2, & s \in [\tfrac{81}{20}, 37), \\
    \tfrac{\sqrt{2}}{2}s - \tfrac{1}{10} s^{\frac{2}{3}} + \tfrac{\sqrt{2}}{5} s^{\frac{1}{3}}, & s \in [37, \infty).
        \end{cases}
    \end{equation}
    We verify that $g$ is a subsolution of \eqref{eqn:main-ODE} in each of the four regions, meaning that the expression $F(g(s))$ defined as in \eqref{eqn:F-g-supersolution} satisfies $F(g(s)) \leq 0$.
    We first prove that $F(g(s)) \leq 0$ on $[37, \infty)$.
    We compute $F(g(s))$ as done previously, for $F(h(s))$ following \eqref{eqn:F-g-supersolution}, to write
    \begin{equation}\label{eqn:q1-q2-s-13}
    F(g(s)) = \frac{Q_1(s^{\frac{1}{3}}) - Q_2(s^{\frac{1}{3}}) \sqrt{s^2 + \frac{2}{3} s + 1}}{3s^2 + 2s + 3}
    \end{equation}
    for polynomials $Q_1, Q_2$ in the variable $\tau = s^{\frac{1}{3}}$.
    To prove that $F(g(s)) \leq 0$, we form the expression
    \[
    \tilde{F}(\tau) := Q_1(\tau)^2 - Q_2(\tau)^2 (\tau^6 + \tfrac{2}{3} \tau^3 + 1) = \tau^2 Q_3(\tau)
    \]
    as done for $\tilde{F}(h(s))$ above.
    We can compute $Q_3(\tau)$, which is the degree-$17$ polynomial given by
    \begin{align*}
        Q_3(\tau) &= \tfrac{182}{25} - \tfrac{101 \sqrt{2}}{25} \tau - \tfrac{5902}{625} \tau^2 + (\tfrac{404}{75}+\tfrac{2543\sqrt{2}}{1250} )\tau^3 - (\tfrac{233\sqrt{2}}{75}-\tfrac{24677}{31250})\tau^4 + (\tfrac{10181}{625}-\tfrac{132731\sqrt{2}}{62500})\tau^5 \\
        & \; \; - (\tfrac{439979}{112500} +\tfrac{17969\sqrt{2}}{3750} ) \tau^6 - (\tfrac{351548}{46875}- \tfrac{383491\sqrt{2}}{225000})\tau^7 - (\tfrac{196903}{112500} + \tfrac{739537\sqrt{2}}{187500} )\tau^8 + (\tfrac{80003}{7500}-\tfrac{3137\sqrt{2}}{187500}) \tau^9 \\
        & \; \; - (\tfrac{125969\sqrt{2}}{25000}-\tfrac{251359}{125000})\tau^{10} + (\tfrac{71021}{37500}+\tfrac{3379\sqrt{2}}{10000} )\tau^{11} - (\tfrac{867}{2500}+\tfrac{301283\sqrt{2}}{187500}) \tau^{12} - (\tfrac{18443}{15625} + \tfrac{17\sqrt{2}}{1000})\tau^{13} \\
        & \; \; - (\tfrac{67671\sqrt{2}}{50000}-\tfrac{29}{20}) \tau^{14} + (\tfrac{5721}{5000}-\tfrac{\sqrt{2}}{2})\tau^{15} + (2-\tfrac{477\sqrt{2}}{1000})\tau^{16} - \tfrac{9}{20} \tau^{17}.
    \end{align*}
     We can trivially bound $Q_3$ from above by removing most terms with negative coefficients and rounding-up the remaining coefficients.
     Also, for all $\tau \geq (37)^{\frac{1}{3}}$, we can bound
    \[
    8+9\tau^3 +14 \tau^5+11\tau^9+\tfrac{5}{2}\tau^{11}\le 10 \tau^{11} \qquad \text{and} \qquad -\tfrac{2}{5}\tau^{14}\le -\tfrac{74}{5}\tau^{11}.
    \]
    Combining these steps, we may estimate $Q_3(\tau)$ by
    \begin{align*}
    Q_3(\tau) &< 8+9\tau^3 +14 \tau^5+11\tau^9+\tfrac{5}{2}\tau^{11}-\tfrac{2}{5}\tau^{14}+\tfrac{9}{20}\tau^{15}+\tfrac{27}{20} \tau^{16}-\tfrac{9}{20} \tau^{17} \\ 
    &< \tfrac{9}{20}\tau^{15}+\tfrac{27}{20} \tau^{16}-\tfrac{9}{20} \tau^{17} \\
    &= \tfrac{9}{20} \tau^{15} (1 + \tau(3-\tau)).
    \end{align*}
    For all $\tau \geq (37)^{\frac{1}{3}}$ we have $1+\tau(3-\tau) \leq 1 - (37)^{\frac{1}{3}} ((37)^{\frac{1}{3}}-3) < - \frac{1}{10}$, so $Q_3(\tau) < 0$ shows that $g(s)$ is a subsolution on $[37, \infty)$.
    
    On $[0,1)$, we again obtain an expression of the form \eqref{eqn:q1-q2-s-13}, where $Q_1$ and $Q_2$ are now polynomials in $\tau = s^{\frac{1}{2}}$.
    A computation similar to the above shows that
    \[
    \tilde{F}(\tau) := Q_1(\tau)^2 - Q_2(\tau)^2 (\tau^4 + \tfrac{2}{3} \tau^2 + 1) = \tau^4 Q_3(\tau),
    \]
    where $Q_3(\tau)$ is degree-$16$ polynomial with $\sup_{[0,1]} Q_3 < - 10^{-2}$.
    Therefore, $\tilde{F}(\tau) \leq 0$ for all $\tau \in [0,1]$ makes $g(s)$ a subsolution on $[0,1]$.
    
    On $[1, \frac{81}{20})$, the quartic polynomial defining $g(s)$ is determined by seeking a polynomial approximate solution of the ODE \eqref{eqn:main-ODE} on this interval with initial condition $\tilde{w}(1) = \lim_{s \nearrow 1} g(1)$, and truncating the coefficients appropriately to guarantee $\lim_{s \nearrow \frac{81}{20}} g(s) > \frac{1}{\sqrt{2}} \frac{81}{20} + \frac{1}{4}$.
    We may compute $F(g(s))$ as done above, in \eqref{eqn:q1-q2-s-13}, to write
    \begin{equation}\label{eqn:q1-q2-s}
    F(g(s)) = \frac{Q_1(s) - Q_2(s) \sqrt{s^2 + \frac{2}{3} s + 1}}{3s^2 + 2s + 3}
    \end{equation}
    for polynomials $Q_1, Q_2$ of degrees $13$ and $12$, respectively.
    Forming the expression $\tilde{F}(g(s)) := Q_1(s)^2 - Q_2(s)^2 (s^2 + \frac{2}{3} s+ 1)$ as above produces a degree-$25$ polynomial (the degree $26$ terms cancel) whose non-positivity on $[1, \frac{81}{20})$ is obtained by a similar computation to the one used for $\tilde{F}(h(s))$ previously.
    In fact, we have the conservative estimate $\tilde{F}(g(s)) < - \frac{1}{10}$, which gives $F(g(s)) < - 10^{-4} < 0$ everywhere in this region.
    The same argument is valid, and in fact simpler, on $[\frac{81}{20}, 37)$, where working with a degree-$13$ polynomial yields $F(g(s)) < - 10^{-3} < 0$ in this interval.
     
    The function $g(s)$ of \eqref{eqn:g-subsolution-for-b+} satisfies the subsolution condition in each region and is piecewise continuous with decreasing jump discontinuities at $s \in \{ 1,  \tfrac{81}{20}, 37 \}$.
    At those points, we verify that
    \begin{align*}
        \lim_{s \nearrow 1} g(s) &= \tfrac{17}{19} > \lim_{s \searrow 1} g(s), \\
        \lim_{s \nearrow \tfrac{81}{20}} g(s) &> \tfrac{\sqrt{2}}{2} \tfrac{81}{20} + \tfrac{1}{4} + 10^{-4} > \lim_{s \searrow \tfrac{81}{20}} g(s), \\
        \lim_{s \nearrow 37} g(s) &> 26 - 45 \cdot 10^{-4} > \lim_{s \searrow 37} g(s),
    \end{align*}
    so $g(s)$ is a global subsolution.
    Moreover, $g'(0) = \tfrac{104}{95}  < \tfrac{7\sqrt{2}}{9} = w'(0)$ shows that $g(s) < w(s)$ for small $s>0$, hence $g(s) < w(s)$ for all $s \in (0,\infty)$.
    Using the expansion \eqref{eqn:w-asymptotic} for $w$ and the form of $g$ as $s \to \infty$, we obtain the bound on $b_+$ in the expression~\eqref{eqn:b+-Estimates}. 

    The corresponding bound \eqref{eqn:b+-Estimates} for $b_-$ and the surface $S_- \subset U_-$ of $\mathbf{C}_0$ follow by a similar analysis of equations \eqref{eqn:w-hat-for-s-hat} and \eqref{eqn:w-hat-s-expansion} for $\hat{w}$.
    An analogous computation to the above shows that the function
    \begin{equation}\label{eqn:g-supersolution-for-b-}
    \hat{g}(\hat{s}) := \begin{cases}
    - \tfrac{\sqrt{2}}{2} \hat{s} - \tfrac{2}{5} \hat{s}^{\frac{2}{3}} + \tfrac{3}{10} \hat{s}^{\frac{1}{3}}, & \hat{s} \in [0,3), \\
    - \tfrac{\sqrt{2}}{2} \hat{s} - 0.11 \, \hat{s}^{\frac{2}{3}} - 5 \sqrt{2} \, (0.11)^2 \, \hat{s}^{\frac{1}{3}}, & \hat{s} \in [3, \infty),
        \end{cases}
    \end{equation}
    is a supersolution to the ODE \eqref{eqn:w-hat-for-s-hat} for all $\hat{s} \in [0, \infty)$.
    This is a piecewise continuous function with an increasing jump discontinuity at $\hat{s} = 3$, where
    \[
    \lim_{\hat{s} \nearrow 3} \hat{g}(s) < - \tfrac{5}{2} < \lim_{\hat{s} \searrow 3} \hat{g}(s),
    \]
    so it is a global supersolution.
    Arguing as above, we obtain $|\hat{w}(\hat{s}) + \frac{7 \sqrt{2}}{15} \hat{s}| = O(\hat{s}^2)$, hence $\hat{w}(\hat{s}) < 0 < \hat{g}(\hat{s})$ for small $\hat{s} > 0$.
    Therefore, $\hat{w}(\hat{s}) < \hat{g}(\hat{s})$ for all $\hat{s} \in (0,\infty)$.
    The bound for $b_-$ now follows from the expansion \eqref{eqn:w-hat-s-expansion} of $\hat{w}$.
\end{proof}
\begin{remark}
Subsolutions such as those used in the above proof can be found algorithmically by the following procedure.
We compute the solution $w$ numerically given initial data and approximate it by a Taylor expansion on overlapping intervals.
One can tune the coefficients of the Taylor polynomial to ensure it satisfies the subsolution criterion and lies below, or agrees with, a given initial value. 
It is convenient to overshoot on the subsolution inequality and shrink the intervals in order to more easily verify the subsolution property, as exemplified above. 
The crucial subsolutions~\eqref{eqn:g-subsolution-for-b+} and \eqref{eqn:g-supersolution-for-b-}, valid in regions of the form $[s_*, \infty)$, then allow us to control the asymptotic behavior of the solution by approximating the equation near infinity.
The initial condition of $w$ is used to produce an intermediate subsolution on $[0,\ve]$, to move away from the singularity of the ODE.
The intermediate range $[\ve, s_*]$ can be broken up into intervals where Taylor approximations are valid.
This method works well with low degree polynomials only on short intervals; the barriers $g, \hat{g}$ in~\eqref{eqn:g-subsolution-for-b+} and \eqref{eqn:g-supersolution-for-b-} demonstrate this method.
\end{remark}
Using this refined asymptotic expansion, we can construct approximately minimal smoothings $\Sigma_{\delta}$ of the singular link $\Sigma_0$ of the cone $\mathbf{C}_0 \times \bR$.
Starting with the distinguished Jacobi field $\phi = z^3 r^{-2} - z$ on $\mathbf{C}_0 \times \bR$, we use an even cutoff function $\psi$ supported in a neighborhood of $\Sigma_0 \cap \{ z = 0 \}$ to define
\[
\zeta := \frac{\int_{\Sigma_0} \phi^2}{\int_{\Sigma_0}\psi(z)\phi^2}\psi(z)\phi.
\]
The surfaces $\Sigma_\delta$ will be constructed to have mean curvature equal to a constant multiple of $\zeta$, meaning that there exists a function $h(\delta)$, for small $|\delta| \leq \delta_0$, such that $H(\Sigma_\delta) = h(\delta) \zeta$.

We first construct auxiliary surfaces $\tilde{\Sigma}_{\delta}$ that desingularize $\Sigma_0$ by attaching appropriate rescaled leaves of $S_\pm$ at the two singular points to the graph of $\delta \phi$.
The link $\Sigma_0 \subset \bS^8$ is given by the set
\[
\{ (x,y,z) \in \bR^5 \times \bR^3 \times \bR : |x|^2 = 2 |y|^2, \; |x|^2 + |y|^2 + z^2 = 1 \} \subset \bS^8 \subset \bR^8 \times \bR
\]
and the two singular points arise at $p_{\pm} := \{ (x,y,z) = (0,0,\pm 1) \}$.
At $p_{\pm}$, we use the unit normal pointing into the region $\{ |x| > \sqrt{2} \, |y| \}$ and work in the chart on $\bS^8$ given by
\begin{equation}\label{eqn:Fpm}
F_{\pm} : \bR^8 \ni \xi \mapsto (\xi, \pm \sqrt{1 - |\xi|^2}).
\end{equation}
Then, $F^* \Sigma_0$ is the Lawson cone $\mathbf{C}_0 = \{ |x|^2 = 2 |y|^2 \}$ and the pullback of the spherical metric becomes $F^* g_{\bS^8} = g_{\on{Euc}} + O(r^2)$ in a neighborhood of the point, for small $r^2 = |\xi|^2$.
Near the singular points $p_{\pm}$, the decay of the Jacobi field $\phi$ matches the asymptotics of the surfaces $S_\pm$, since
\begin{equation}\label{eqn:match-Hardt-Simon}
\phi = z^3 r^{-2} - z = \pm r^{-2} (1-r^2)^{\frac{3}{2}} \mp (1-r^2)^{\frac{1}{2}} = \pm \, r^{-2} + O(1)
\end{equation}
and the leaves $S_{\pm}$ are the normal graphs of $\pm r (r^{-3} + b_{\pm} r^{-4} + O(r^{-6})) = \pm r^{-2} + O(r^{-3})$ over $\mathbf{C}_0$.
We let $r_{\delta} = \delta^{\frac{\alpha}{3}}$ for some $\alpha < 1$ sufficiently close to $1$ and define $r \in (r_{\delta}, 2 r_{\delta})$ to be the gluing region.
For $\lambda \in \bR$, we denote by $\lambda S := |\lambda| S_{\mr{sgn}(\lambda)}$, the appropriate rescalings of the Hardt-Simon leaves $S_+$ or $S_-$; notably, $\pm S = S_{\pm}$.
Using the charts \eqref{eqn:Fpm} to identify a neighborhood of each singular point with a ball in $\bR^8$, we define the smoothing $\tilde{\Sigma}_{\delta}$ to be the surface $\pm \delta^{\frac{1}{3}} S$ in the component of the region $r < r_{\delta}$ containing $p_\pm$ and to be the graph of $\delta \phi$ over $\Sigma_0$ in the region $r > 2 r_{\delta}$.
Since $\phi$ is an odd function, this construction requires choosing different leaves $S_{\pm}$ of the Hardt-Simon foliation on the two singular points $p_{\pm}$.

In the region $(r_{\delta}, 2 r_{\delta})$ we use a smooth transition function to interpolate between the two pieces.
We rescale the domain by $\tilde{r} := r_{\delta}^{-1} r$ to express $r_{\delta}^{-1} ( \pm \delta^{\frac{1}{3}} S)$ as the graph of
\[
\pm \left( r_{\delta}^{-3} \delta \, \tilde{r}^{-2} + b_{\pm} r_{\delta}^{-4} \delta^{\frac{4}{3}} \tilde{r}^{-3} \right) + O( r_{\delta}^{-1} \delta + r_{\delta}^{-9} \delta^3) 
\]
over $\mathbf{C}_0$, with respect to the spherical normal vector in the metric $r_{\delta}^{-2} F^* g_{\bS^8} = g_{\on{Euc}} + O(r_{\delta}^2)$ for $\tilde{r} \in (1,2)$.
The coefficient $b_{\pm}$ corresponds to the surface $\pm \delta^{\frac{1}{3}}S$, as defined above and glued in near the point $p_{\pm}$.
In view of the expansion \eqref{eqn:match-Hardt-Simon}, we can define $r_{\delta}^{-1}\tilde{\Sigma}_{\delta}$ on the region $\tilde{r} \in (1,2)$ to be the graph of a function $v_\delta$ given by
\begin{equation}\label{eqn:definition-gluing-region}
    v_{\delta}(\tilde{r}) := \begin{cases}
    r_{\delta}^{-3} \delta \, \tilde{r}^{-2} + \chi(\tilde{r}) b_{+} \, r_{\delta}^{-4} \delta^{\frac{4}{3}} + O( r_{\delta}^{-1} \delta + r_{\delta}^{-9} \delta^3) & \tilde{r} \in (1,2)\cap\{z > 0\}, \\
    -\left(r_{\delta}^{-3} \delta \, \tilde{r}^{-2} + \chi(\tilde{r}) b_{-} \, r_{\delta}^{-4} \delta^{\frac{4}{3}}\right) + O( r_{\delta}^{-1} \delta + r_{\delta}^{-9} \delta^3) & \tilde{r} \in (1,2)\cap\{z < 0\}, \\
    \end{cases}
\end{equation}
where $\chi$ is some smooth cutoff function with $\chi(s) = 1$ for $s \leq 1$ and $\chi(s) = 0$ for $s \geq 2$.

The linear analysis using appropriately defined weighted $C^{k,\alpha}$ space constructions and non-linear estimates of \cite{uniqueness-gabor}*{\S 3.2 -- 3.4} apply directly in our setting, where Propositions 3.5 -- 3.8 carry over to the space of $G$-invariant functions for $G = O(5) \times O(3)$.
Therefore, for all sufficiently small $|\delta| \leq \delta_0$ we obtain surfaces $\Sigma_{\delta}$ as graphs of functions $u \in C_{\tau}^{2,\alpha,G,0}$ over $\tilde{\Sigma}_{\delta}$ with small norm $\| u\|_{C^{2,\alpha}_{\tau}} \leq \delta^{1 + \frac{\kappa}{3}}$, whose mean curvature is a constant multiple of $\zeta$.
We furthermore normalize $u = 0$ along the ray $\{ r = r_0, z > 0\}$, for some small $r_0$ with $\phi(r_0) \neq 0$, to ensure the uniqueness of the surfaces $\Sigma_{\delta}$.
The family $\Sigma_\delta$ is the replacement for the nearby minimal surfaces generated by the integrable Jacobi fields on the cones studied by Simon \cite{uniqueness-cylindrical}.
\begin{proposition}\label{prop:approximate-Sigma-delta}
For all $|\delta| \leq \delta_0$ sufficiently small, the mean curvature of the smoothings $\Sigma_{\delta}$ of $\Sigma_0$ is given by $H(\Sigma_{\delta}) = h(\delta) \zeta$, for a function $h(\delta)$ satisfying
    \begin{equation}\label{eqn:h-properties}
h(\delta) = c_1 (b_+ + b_-) \delta^{\frac{4}{3}} + O\big(|\delta|^{\frac{4}{3}+\kappa}\big) \quad \text{and}\quad h'(\delta) = \tfrac{4}{3} c_1 (b_+ + b_-) \delta^{\frac{1}{3}} + O\big(|\delta|^{\frac{1}{3}+\kappa}\big)
\end{equation}
for some small $\kappa > 0$, some constant $c_1 > 0$ depending on the geometry of the link $\Sigma_0$.
\end{proposition}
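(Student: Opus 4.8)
The plan is to pin down $h(\delta)$ from the solvability condition for the mean curvature equation over $\tilde\Sigma_\delta$, and then to localize the resulting integral to the two gluing annuli, where the coefficients $b_\pm$ of Proposition~\ref{prop:main-expansion} enter. Writing $\Sigma_\delta = \operatorname{graph}_u\tilde\Sigma_\delta$ with $\|u\|_{C^{2,\alpha}_\tau}\le \delta^{1+\kappa/3}$ as produced by the construction, $u$ solves $H(\tilde\Sigma_\delta) + \cL_{\tilde\Sigma_\delta}u + \cQ(u) = h(\delta)\,\zeta$, where $\cL_{\tilde\Sigma_\delta}$ is the Jacobi operator and $\cQ$ collects the quadratic and higher-order terms. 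Since $\phi$ spans the one-dimensional approximate cokernel of $\cL_{\tilde\Sigma_\delta}$ on the weighted space used in the construction, pairing with $\phi$ and using $\langle\zeta,\phi\rangle = \int_{\Sigma_0}\phi^2$, $\langle\cQ(u),\phi\rangle = O(\|u\|^2) = O(|\delta|^{2+2\kappa/3})$, and $\langle\cL_{\tilde\Sigma_\delta}u,\phi\rangle = \langle u,\cL_{\tilde\Sigma_\delta}\phi\rangle = o(|\delta|^{4/3})$ (the latter because $\cL_{\tilde\Sigma_\delta}\phi$ is $O(\delta)$ on the outer region, where $\phi$ is exactly Jacobi on $\Sigma_0$, and is supported on a set of small volume near the cores), I obtain
\[
h(\delta) = \frac{1}{\int_{\Sigma_0}\phi^2}\,\int_{\tilde\Sigma_\delta} H(\tilde\Sigma_\delta)\,\phi \;+\; O\big(|\delta|^{4/3+\kappa}\big).
\]
For $h'(\delta)$ one differentiates this identity in $\delta$: the fixed-point construction of $(u,h(\delta))$ depends $C^1$-ly on $\delta$ with $\partial_\delta u$ obeying the corresponding derivative bound $O(|\delta|^{\kappa/3})$, by the implicit function theorem exactly as in \cite{uniqueness-gabor}*{\S3.4} applied to the $\delta$-parametrized family $\tilde\Sigma_\delta$.

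It then remains to evaluate $\int_{\tilde\Sigma_\delta}H(\tilde\Sigma_\delta)\,\phi$, for which I split $\tilde\Sigma_\delta$ into the outer region $r>2r_\delta$, the two gluing annuli $r\in(r_\delta,2r_\delta)$, and the two cores $r<r_\delta$. On the outer region $\tilde\Sigma_\delta$ is the graph of $\delta\phi$ over $\Sigma_0$ and $\phi$ is Jacobi, so $H = \cQ(\delta\phi)$ is quadratic in $\delta$ and its pairing with $\phi$ is $O(|\delta|^{2-\alpha/3})$, which is $o(|\delta|^{4/3})$ once $\alpha<1$ is close enough to $1$. On each core $\pm\delta^{1/3}S$ is area-minimizing in $\bR^8$, so its mean curvature in $\bS^8$ is produced only by the $O(r^2)$ correction of $F^*g_{\bS^8}$; after rescaling by $r_\delta^{-1}$ this contributes $O(|\delta|^{2\alpha})$, again $o(|\delta|^{4/3})$. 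Hence only the two gluing annuli contribute at order $\delta^{4/3}$.

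On the gluing annulus near $p_\pm$, rescaling by $r_\delta^{-1}$ presents $\tilde\Sigma_\delta$ as the graph of $v_\delta$ from \eqref{eqn:definition-gluing-region} over $\mathbf{C}_0$ in the metric $g_{\on{Euc}}+O(r_\delta^2)$. The leading piece $\pm r_\delta^{-3}\delta\,\tilde r^{-2}$ is the radial homogeneous Jacobi field of $\mathbf{C}_0$ of indicial exponent $-2$, matching the leading behavior of $\phi$ by \eqref{eqn:match-Hardt-Simon}, so it contributes nothing to the pairing (its weighted Wronskian against $\phi$ vanishes). The subleading piece $\pm\chi(\tilde r)\,b_\pm\,r_\delta^{-4}\delta^{4/3}\,\tilde r^{-3}$ is the cutoff of the second radial indicial solution $\tilde r^{-3}$; integrating $\int(\cL_{\mathbf{C}_0}v_\delta)\,\phi$ by parts against the Jacobi field $\phi$ reduces it to a boundary flux term at $\tilde r = 1$, and the constancy of $\tilde r^{6}$ times the Wronskian of $\tilde r^{-2}$ and $\tilde r^{-3}$ extracts precisely a nonzero universal constant times $b_\pm\delta^{4/3}$, independently of the cutoff $\chi$. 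Undoing the rescaling (the factor $r_\delta^{-1}$ from $H$, $r_\delta^{-2}$ from $\phi$, and $r_\delta^{7}$ from the volume) converts this into a genuine $\delta^{4/3}$-term; because $\phi = \pm r^{-2}+O(1)$ at $p_\pm$ while the $r^{-3}$-coefficient of the glued leaf carries the matching $\pm$, the two annuli reinforce rather than cancel and sum to $c_1(b_++b_-)\,\delta^{4/3}$ for a single constant $c_1>0$ depending only on $\int_{\Sigma_0}\phi^2$, the flux constant, and the volume of the link of $\mathbf{C}_0$. The quadratic and metric-error contributions on the annulus are $O(|\delta|^{2-2\alpha/3}+|\delta|^{2\alpha})$, hence $O(|\delta|^{4/3+\kappa})$ for $\kappa>0$ small; combining with the first paragraph yields the stated expansion of $h$, and differentiating the entire decomposition in $\delta$ — using that $v_\delta$ depends on $\delta$ only through the explicit powers $\delta$ and $\delta^{4/3}$, together with the $C^1$-estimate for $u$ — yields that of $h'$.

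The main obstacle is this flux computation on the gluing annulus: one must verify that the leading mean curvature there, paired with $\phi$, isolates exactly the coefficient $b_\pm$ with a $\chi$-independent, nonvanishing constant — so that the smooth lower-order parts of $v_\delta$ inherited from matching with $\delta\phi$, the $O(r_\delta^2)$ metric error, and the nonlinearity $\cQ$ all drop to strictly lower order — and then assemble the two contributions with the correct normal orientations and the sign flip of $\phi$ between $p_+$ and $p_-$, so that one genuinely obtains $(b_++b_-)$ rather than $(b_+-b_-)$; it is here that Proposition~\ref{prop:main-expansion}, with its conclusion $b_++b_-\neq0$, is essential. Everything else is a bookkeeping of powers of $\delta$ and $r_\delta$, controlled by taking $\alpha<1$ sufficiently close to $1$, in parallel with the analogous estimates of \cite{uniqueness-gabor}.
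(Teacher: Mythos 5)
Your argument follows the same route as the paper's proof: reduce $h(\delta)$ to the pairing $\int_{\tilde{\Sigma}_{\delta}} H(\tilde{\Sigma}_{\delta})\,\phi \, dA$ (this transfer, together with the $h'$ statement, is exactly what the paper delegates to \cite{uniqueness-gabor}*{Proposition 3.10, Corollary 3.11}), then localize that integral to the two gluing annuli, where the cutoff acting on the $b_{\pm}\tilde{r}^{-3}$ term yields $c_0 b_{\pm}\delta^{\frac{4}{3}}$, and sum using the sign of $\phi$ at $p_{\pm}$ to obtain $(b_+ + b_-)$. Your Wronskian/flux computation making the constant $\chi$-independent and the $\delta$-differentiation via smooth dependence are precisely the details behind the paper's citation of $\mathcal{L}_{g_0}(\chi(\tilde{r})\tilde{r}^{-3})$ and of \cite{uniqueness-gabor}, so the proposal is correct and essentially identical in approach.
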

\begin{proof}
Following the strategy of \cite{uniqueness-gabor}*{\S 3.5}, we first note that
\begin{equation}\label{eqn:integral-over-sigma-delta-1}
\int_{\tilde{\Sigma}_{\delta}} H(\tilde{\Sigma}_{\delta}) \phi \, dA = c_0 (b_+ + b_-) \delta^{\frac{4}{3}} + O( \delta^{\frac{4}{3}+ \kappa'})
\end{equation}
for some $\kappa' > 0$, where $c_0$ denotes the volume of the link of $\mathbf{C}_0$.
The computation of \cite{uniqueness-gabor}*{Proposition 3.9}, applies to our setting, showing that the main contribution to the above integral comes from the gluing regions $r \in (r_{\delta}, 2 r_{\delta})$ near each singular point $p_{\pm}$.
Working in the rescaled variable $\tilde{r} := r_{\delta}^{-1} r \in (1,2)$, we combine the expressions \eqref{eqn:match-Hardt-Simon} for $\phi$ and \eqref{eqn:definition-gluing-region} for $v_{\delta}$ to obtain
\begin{align*} H(\tilde{\Sigma}_{\delta}) \phi &= b_{\pm} r_{\delta}^{-4} \delta^{\frac{4}{3}} \, \cL_{g_0} ( \chi(\tilde{r}) \tilde{r}^{-3}) \cdot \tilde{r}^{-2} + O( r_{\delta}^{-1} \delta + r_{\delta}^{-6} \delta^2) \\
&= b_{\pm} r_{\delta}^{-4} \delta^{\frac{4}{3}} \, \chi''(\tilde{r}) \, \tilde{r}^{-3} + O(r_{\delta}^{-1} \delta + r_{\delta}^{-6} \delta^2),
\end{align*}
for $\cL_{g_0}$, the Jacobi operator over $\mathbf{C}_0$ with respect to the metric $g_0 = g_{\on{Euc}}|_{\mathbf{C}_0}$.
Choosing $r_{\delta} = \delta^{\frac{\alpha}{3}}$ for $\alpha < 1$ sufficiently close to $1$ guarantees that the error terms and the contributions from the other regions are $O(\delta^{\frac{4}{3} + \kappa'})$, for some $\kappa' > 0$, while the leading contribution to the integral of \eqref{eqn:integral-over-sigma-delta-1} is $c_0 b_{\pm} \delta^{\frac{4}{3}}$ in the gluing region near $p_{\pm}$.
This proves equality \eqref{eqn:integral-over-sigma-delta-1}.
The arguments of \cite{uniqueness-gabor}*{Proposition 3.10, Corollary 3.11} now apply to our setting, showing that $h(\delta) = c_1 (b_+ + b_-) \delta^{\frac{4}{3}} + O(\delta^{\frac{4}{3} + \kappa})$ for some $c_1, \kappa > 0$.
Since $(b_+ + b_-) < 0$ due to Proposition \ref{prop:main-expansion}, the equality \eqref{eqn:h-properties} follows.
\end{proof}

\vspace{-0.05in}

Having obtained the key properties of the approximate smoothings $\Sigma_{\delta}$ in Proposition \ref{prop:approximate-Sigma-delta}, the proof of the uniqueness of $\mathbf{C}_0 \times \bR$ follows the same strategy as for $C_{3,3} \times \bR$.
This strategy can prove the uniqueness of the tangent cone $C \times \bR$ for any strictly stable, strictly area-minimizing cone $C$, including $C (\bS^2 \times  \bS^4)$, that admits the refined asymptotics of Proposition~\ref{prop:main-expansion}, mainly the estimate $(b_+ + b_-) \neq 0$.
We review the main steps below and refer the reader to \cite{uniqueness-gabor}*{\S 4 -- 6} for details.

\begin{proof}[Proof of Theorem~\ref{thm}]
The expansion \eqref{eqn:h-properties} allows us to apply the arguments of \cite{uniqueness-gabor}*{\S 3.6} to the cone $\mathbf{C}_0$.
The surfaces $\Sigma_{\delta}$ have strictly larger area than $\Sigma_0$ for sufficiently small $\delta >0$, and satisfy
\[
\on{Area}(\Sigma_{\delta}) \leq \on{Area}(\Sigma_0) + C |\delta|  |h(\delta)|.
\]
There exist functions $\phi_{\delta}, \xi_{\delta}$ on $\Sigma_{\delta}$, for $|\delta| \leq \delta_0$, satisfying
\begin{equation}\label{eqn:phi-delta-xi-delta}
    \cL_{\Sigma_{\delta}} \phi_{\delta} = h'(\delta) \zeta, \qquad \cL_{\Sigma_{\delta}} \xi_{\delta} = \zeta - c_{\delta} \phi_{\delta}, \qquad \la \xi_{\delta}, \phi_{\delta} \rg_{L^2(\Sigma_{\delta})} = 0,
\end{equation}
and $\phi_{\delta} = \phi$ along $\{ r = r_0, z > 0 \}$, for the $r_0$ used in constructing $\Sigma_{\delta}$.
These extend to degree-one homogeneous functions on $V_{\delta}$, and we define the cone $W_{\delta}$ as the graph of $- h(\delta) \xi_{\delta}$ over $V_{\delta}$.

The cone $V_{\delta} = C(\Sigma_{\delta}) \subset \bR^8 \times \bR$ has mean curvature $H(V_{\delta}) = h(\delta) \zeta \rho^{-2}$, where $\rho := (|x|^2 + |y|^2 + z^2)^{\frac{1}{2}} = (r^2 + z^2)^{\frac{1}{2}}$ is the distance from the origin in $\bR^8 \times \bR$.
Since $H(V_{\delta}) \neq 0$ for small $\delta \neq 0$, by~\eqref{eqn:h-properties}, we can construct comparison surfaces $T_{\delta}$ as small minimal perturbations of $V_{\delta}$ in a suitable weighted $C^{2,\alpha}_{\gamma, \tau}$ sense (cf.~\cite{uniqueness-gabor}*{Definition 4.1}) over annuli of the form $|\log \rho| < |\delta|^{- \kappa}$ for small $\kappa > 0$.
Our setting uses the corresponding $G$-invariant function spaces $C^{k,\alpha,G}_{\gamma, \tau}$, where $G := O(5) \times O(3)$ and the indicial interval $\tau \in (-3, -2)$ for $C_{2,4}$ is the same as for $C_{3,3}$. 
The arguments of Propositions 4.3 -- 4.5 of \cite{uniqueness-gabor} apply in this context: there is an $\ve > 0$ such that for all $|\delta| \leq \delta_0$, there exists a function $f$ on $\{ |\log \rho| < |\delta|^{- \kappa} \} \cap V_{\delta}$ satisfying $\| f \|_{C^{2,\alpha}_{1,\tau}} \leq |h(\delta)| \cdot |\delta|^{\ve}$, and the normal graph of $- h(\delta) \left( 7^{-1} \phi_{\delta} \log \rho + \xi_{\delta} \right) + f$ over $V_{\delta}$ is a minimal surface $T_{\delta}$.
For $|\log \Lambda| < |\delta|^{-\kappa}$, the surface $\Lambda T_{\delta}$ is the graph of a function over $W_{\delta}$ with bounded $C^{2,\alpha}_{1,-2}(B_1 \setminus B_{1/2})$ norm.
These results imply a \L{}ojasiewicz inequality as in \cite{uniqueness-gabor}*{Proposition 4.7}, producing a uniform $C>0$~with
\begin{equation}\label{eqn:lojasiewicz-monotonicity}
\int_{T_{\delta} \cap (B_{1/2} \setminus B_{1/4})} \frac{|X^{\perp}|^2}{|X|^{10}} > C^{-1} |h(\delta)|^2
\end{equation}
For $|\delta| \leq \delta_0$, we denote by $\cT, \cW$ the sets of all comparison surfaces $T_{\delta}, W_{\delta}$ and their rotations in $\bR^9$.
Let $\cM$ denote the class of least-area oriented boundaries in the ball $B_2(0) \subset \bR^9$. 
For $T =\Lambda T_{\delta}\in \cT$ with $|\log \Lambda| \le |\delta|^{-\kappa}$, we may define a quantity $D_T(M)$ that measures the distance of an $M \in \cM$ relative to $T$ in a fixed annulus $B_1\setminus B_{\rho_0}$ as in \cite{uniqueness-gabor}*{5.1 -- 5.5}.
Using a barrier argument, we can deduce the necessary non-concentration estimate for the distance $D_T(M)$, analogous to the $L^2$ non-concentration in \cite{uniqueness-cylindrical}.
We thus arrive at a ``geometric three-annulus Lemma" as in~\cite{uniqueness-gabor}*{Proposition 5.12}, there exist some $\alpha, L>0$ such that if $D_T(M)$ is sufficiently small, then
    \begin{enumerate}[(i)]
       \item if $D_{LT}(LM)\ge L^{\alpha}D_T(M)$, then $D_{L^2T}(L^2M)\ge L^{\alpha} D_{LT}(LM)$, and
        \item if $D_{L^{-1}T}(L^{-1}M)\ge L^{\alpha}D_T(M)$, then $D_{L^{-2}T}(L^{-2}M)\ge L^{\alpha} D_{L^{-1}T}(L^{-1}M)$.
    \end{enumerate}
Supposing otherwise, a blowup argument would produce a non-trivial Jacobi field on $\mathbf{C}_0 \times \bR$ violating Simon's $L^2$ three-annulus Lemma \cite{SimonAnnals81}. 
For the constant $L>0$ and $\beta>0$, we define the quantity
\begin{equation}\label{eqn:E-beta}
    E_{\beta}(M) := \inf \{ D_W(M) + D_W(L^{\beta} M) : W \in \cW \},
\end{equation}
which controls the $\cF$-distance (in the sense of currents) of $M$ and $L^{\beta} M$ using the cones $\cW$, since $d_{\cF} (M, L^{\beta} M) \leq C \cdot E_{\beta}(M)$ for any minimal surface $M \in \cM$ in $B_2$ with $\on{Area}(M) < 2 \cdot \on{Area}(B_2 \cap (\mathbf{C}_0 \times \bR))$.
Using \eqref{eqn:lojasiewicz-monotonicity}, we obtain a \L{}ojasiewicz inequality for the area excess
\[
\cA(M) := \on{Area}(M \cap B_{1/2}) - \on{Area}( (\mathbf{C}_0 \times \bR) \cap B_{1/2})
\]
of any $M \in \cM$.
This has the form
\begin{equation}\label{eqn:lojasiewicz-simon}
    \cA(M) - \cA(2M) > C^{-1} |h(\delta)|^2 \quad \text{and} \quad \cA(M)^{\theta} - \cA(2M)^{\theta} > C^{-1} |h(\delta)|
\end{equation}
for sufficiently small $\theta > 0$, as in \cite{uniqueness-gabor}*{Proposition 6.3}.
Combining the geometric three-annulus lemma with the \L{}ojasiewicz inequality \eqref{eqn:lojasiewicz-simon} then proves that any minimal surface $M \in \cM$ in $B_1$ sufficiently Hausdorff-close to $\mathbf{C}_0 \times \bR$ and with the same density at the origin satisfies
\[
E_{\beta}(L^{\beta} M) \leq \tfrac{1}{2} E_{\beta} (M) \quad \text{or} \quad E_{\beta}(L^{\beta} M) \leq C \left( \cA(L^{\beta} M)^{\theta} - \cA(2 \, L^{\beta} M)^{\theta} \right)
\]
as in \cite{uniqueness-gabor}*{Propositions 6.5 -- 6.6}.
Iterating this procedure produces an inequality of the form
\[
d_{\cF} (M, L^{(N+1)\beta} M) \leq C ( E_{\beta}(M) + \cA(M)^{\theta})
\]
and since $\mathbf{C}_0 \times \bR$ is a tangent cone, we have $d_{\cF} (L^{N_i \beta} M, \mathbf{C}_0 \times \bR) \to 0$ for a sequence $N_i \to \infty$, thus $d_{\cF}(L^{k \beta} M, \mathbf{C}_0 \times \bR) \to 0$ as $k \to \infty$.
Therefore, $\mathbf{C}_0 \times \bR$ is the unique tangent cone at the origin.
\end{proof}

\nocite{*}
\bibliography{ref}

\end{document}